\newtheorem{tm}{Theorem}
\newtheorem{proposition}[tm]{Proposition}
\newtheorem{lemma}[tm]{Lemma}
\newtheorem{corollary}[tm]{Corollary}
\theoremstyle{definition}
\newtheorem{question}[tm]{Question}
\theoremstyle{remark}
\newtheorem*{acknowledgments}{Acknowledgments}
\newtheorem*{remark}{Remark}
\newcommand{\Q}{\mathbb{Q}}
\newcommand{\Qbar}{\overline{\Q}}
\newcommand{\C}{\mathbb{C}}
\newcommand{\Z}{\mathbb{Z}}
\newcommand{\F}{\mathbb{F}}
\newcommand{\ab}{\mathrm{ab}}
\newcommand{\tors}{\mathrm{tors}}
\DeclareMathOperator{\End}{End}
\DeclareMathOperator{\Gal}{Gal}
\DeclareMathOperator{\GL}{GL}
\DeclareMathOperator{\id}{id}
\DeclareMathOperator{\SL}{SL}
\DeclareMathOperator{\PSL}{PSL}
\DeclareMathOperator{\Hom}{Hom}
\DeclareMathOperator{\Res}{Res}
\DeclareMathOperator{\rk}{rk}
\DeclareMathOperator{\ord}{ord}
\def\lsup#1#2{{}^{#1}\!{#2}}
\begin{document}

\title{The growth of the rank of Abelian varieties upon extensions}
\author{Peter Bruin}
\address{Institut f\"ur Mathematik\\ Universit\"at Z\"urich\\ Winterthurerstrasse 190\\ 8057 Z\"urich\\ Switzerland}
\email{peter.bruin@math.uzh.ch}
\author{Filip Najman}
\address{Department of Mathematics\\ University of Zagreb\\ Bijeni\v cka cesta 30\\ 10000 Zagreb\\ Croatia}
\email{fnajman@math.hr}
\thanks{P.B. was supported by Swiss National Science Foundation grants 124737 and 137928, and by the Max-Planck-Institut f\"ur Mathematik, Bonn.\\
\indent F.N. was supported by the Ministry of Science, Education, and Sports, Republic of Croatia, grant 037-0372781-2821.}

\begin{abstract}
We study the growth of the rank of elliptic curves and, more generally, Abelian varieties upon extensions of number fields.

First, we show that if $L/K$ is a finite Galois extension of number fields such that $\Gal(L/K)$ does not have an index 2 subgroup and $A/K$ is an Abelian variety, then $\rk A(L)-\rk A(K)$ can never be 1. We obtain more precise results when $\Gal(L/K)$ is of odd order, alternating, $\SL_2(\F_p)$ or $\PSL_2(\F_p)$. This implies a restriction on $\rk E(K(E[p]))-\rk E(K(\zeta_p))$ when $E/K$ is an elliptic curve whose mod $p$ Galois representation is surjective. Similar results are obtained for the growth of the rank in certain non-Galois extensions.

Second, we show that for every $n\ge2$ there exists an elliptic curve $E$ over a number field $K$ such that $\Q\otimes\End_\Q\Res_{K/\Q} E$ contains a number field of degree~$2^n$. We ask whether every elliptic curve $E/K$ has infinite rank over $K\Q(2)$, where $\Q(2)$ is the compositum of all quadratic extensions of~$\Q$. We show that if the answer is yes, then for any $n\ge2$, there exists an elliptic curve $E/K$ admitting infinitely many quadratic twists whose rank is a positive multiple of $2^n$.
\end{abstract}

\maketitle

\section{Introduction}

Let $A$ be an Abelian variety over a number field~$K$. By the Mordell--Weil theorem, the Abelian group $A(K)$ of $K$-rational points of $A$ is finitely generated, so it is of the form $T\oplus \Z^r$, where $T$ is the torsion group and $r$ is the rank of $A$.
%
In this paper, we study the growth of the rank of Abelian varieties upon extensions of number fields.

In Section \ref{sec2}, we consider an Abelian variety $A$ over a number field $K$ and a finite Galois extension $L/K$. We show that the structure of the group $G=\Gal(L/K)$ can impose restrictions on the growth of the rank of~$A$ under base extension from $K$ to~$L$.
Suppose throughout this paragraph that $\rk A(L)$ is strictly greater than~$\rk A(K)$. We prove that if $G$ has odd order, then $\rk A(L)-\rk A(K)\geq p-1$, where $p$ is the smallest prime factor of $\#G$.
If $G$ is the alternating group $A_n$ with $n\ge5$, we obtain $\rk A(L)- \rk A(K)\geq n-1$. If $G$ is $A_3$ or $A_4$, then $\rk A(L)- \rk A(K)\geq 2$.
If $G$ is $\SL_2(\F_p)$ or $\PSL_2(\F_p)$ for a prime $p>2$, then $\rk A(L)- \rk A(K)\geq \frac{p-1}{2}$.
As a corollary, if $E/K$ is an elliptic curve and $p>2$ is a prime such that the mod $p$ Galois representation of $E$ is surjective, then $\rk E(K(E[p]))-\rk E(K(\zeta_p))$ is either zero or at least $\frac{p-1}{2}$.
We prove that $\rk A(L)- \rk A(K)\geq 2$ whenever $G$ does not have an index 2 subgroup.

Furthermore, for certain non-Galois extensions $L/K$ with Galois closure $M$, we exhibit non-trivial relations between $\rk A(M)- \rk A(K)$ and $\rk A(L)- \rk A(K)$.

In Section \ref{sec3}, we build on the ideas of \cite{bbd}, where it was proved that $\Q$-curves of certain type have additional structure, which forces them to have even rank over their field of definition. The additional structure of these curves can be seen in the endomorphisms of their restrictions of scalars. In a similar way we construct, for arbitrarily large~$n$, an elliptic curve $E_n$ defined over a number field $K_n$ such that the endomorphism algebra of the restriction of scalars $\Res_{K_n/\Q}E_n$ contains a cyclotomic field of degree~$2^n$.

Let $\Q(2)$ be the compositum of all quadratic extensions of~$\Q$. We ask the following question: does every elliptic curve $E$ over a number field~$K$ have infinite rank over $K\Q(2)$? We show that if the answer is yes, then an elliptic curve $E_n/K_n$ as above has infinitely many quadratic twists whose rank is a positive multiple of~$2^n$.

\section{Growth of the rank in extensions}
\label{sec2}
In this section we study how the rank of an Abelian variety can grow upon finite extensions.

\subsection{Galois extensions}

Let $A$ be an Abelian variety over $K$, and let $L$ be a finite Galois extension of $K$. We compare the ranks of $A(K)$ and $A(L)$ by looking at the action of $G=\Gal(L/K)$ on $\Q\otimes A(L)$.
In particular, we are interested in the possible $\Q$-dimensions of $\Q\otimes (A(L)/A(K))$. This is controlled by the dimensions of the non-trivial irreducible $\Q$-linear representations of~$G$. For example, the smallest non-trivial increase of the rank of $A$ when going from $K$ to~$L$ equals the smallest dimension of an irreducible non-trivial $\Q$-linear representation of $\Gal(L/K)$. We will study such constraints on the rank of $A(L)$ for various groups~$G$.

\begin{tm}
\label{cyclic}
Let $p>2$ be a prime, let $A$ be an Abelian variety defined over a number field $K$, and let $L$ be a Galois extension of $K$ such that $\Gal(L/K)\simeq \Z /p\Z$. Then $$\rk A(L)\equiv \rk A(K) \pmod{p-1}.$$
\end{tm}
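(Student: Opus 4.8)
The plan is to study $V=\Q\otimes A(L)$ as a module over the group algebra $\Q[G]$, where $G=\Gal(L/K)\simeq\Z/p\Z$, and to exploit the fact that this algebra has only two simple modules, of $\Q$-dimensions $1$ and $p-1$.

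First I would note that $G$ acts on $A(L)$, hence on the $\Q$-vector space $V=\Q\otimes A(L)$, whose dimension is $\rk A(L)$. Since $\Q$ has characteristic $0$, Maschke's theorem guarantees that $\Q[G]$ is semisimple, so $V$ is a direct sum of simple $\Q[G]$-modules. Because $G$ is cyclic of order $p$, there is a decomposition of $\Q$-algebras $\Q[G]\simeq\Q[x]/(x^p-1)\simeq\Q\times\Q(\zeta_p)$, the first factor corresponding to the trivial representation and the second to the unique non-trivial irreducible $\Q$-linear representation $W$, with $\dim_\Q W=[\Q(\zeta_p):\Q]=p-1$. Hence $V\simeq\Q^{\oplus a}\oplus W^{\oplus b}$ for some integers $a,b\ge0$, and $\rk A(L)=a+b(p-1)$.

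It then remains to identify $a$ with $\rk A(K)$. The multiplicity $a$ of the trivial representation in $V$ equals $\dim_\Q V^G$. Since $L/K$ is Galois, Galois descent for rational points gives $A(L)^G=A(K)$, and taking $G$-invariants commutes with $-\otimes\Q$ because of the averaging projector $\frac1{\#G}\sum_{\sigma\in G}\sigma$ available in characteristic $0$; therefore $V^G=\Q\otimes A(L)^G=\Q\otimes A(K)$, which has dimension $\rk A(K)$. Consequently $a=\rk A(K)$ and
\[
\rk A(L)-\rk A(K)=b(p-1)\equiv0\pmod{p-1}.
\]

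The argument is essentially formal once the $\Q$-representation theory of $\Z/p\Z$ is in hand; the only point needing a little care is the identification $V^G=\Q\otimes A(K)$, i.e.\ that forming $G$-invariants is compatible with $-\otimes\Q$, which is where semisimplicity of $\Q[G]$ (equivalently, the averaging idempotent) enters. I do not expect a genuine obstacle here, and I would anticipate that the same mechanism—decompose $\Q\otimes A(L)$ into $\Q[\Gal(L/K)]$-isotypic pieces and read off the contribution of the trivial summand—will drive the analogous results for the other groups treated later in this section, with the combinatorics of the irreducible $\Q$-representations and their dimensions being the only part that changes.
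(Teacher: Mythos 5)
Your proposal is correct and follows the same route as the paper: decompose $\Q\otimes A(L)$ as a $\Q[\Gal(L/K)]$-module into the trivial isotypic piece (identified with $\Q\otimes A(K)$) and copies of the unique non-trivial irreducible $\Q$-representation of $\Z/p\Z$, which has dimension $p-1$. You simply spell out more of the standard details (Maschke, the decomposition $\Q[G]\simeq\Q\times\Q(\zeta_p)$, compatibility of invariants with $-\otimes\Q$) than the paper does.
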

\begin{proof}
First note that the fixed subspace of $\Q \otimes A(L)$ under $\Gal(L/K)$ corresponds to $\Q\otimes A(K)$, the dimension of which is $\rk A(K)$.

Over $\C$, the irreducible representations of the group $\Z/p \Z$ are the trivial representation and the $p-1$ representations corresponding to the non-trivial characters of $\Z/p \Z$. However, the smallest field over which the non-trivial characters are defined is $\Q(\zeta_p)$. In fact, there are two irreducible $\Q$-linear representations: the trivial representation and a representation of dimension $p-1$. This implies that $\rk A(L)-\rk A(K)=\dim_\Q(\Q\otimes(A(L)/A(K)))$ is a multiple of $p-1$.
\end{proof}

\begin{corollary}
Let $A$ be an Abelian variety defined over a number field $K$, and let $L/K$ be a Galois extension of odd degree $n$. Then
$$\rk A(L)= \rk A(K) \text{ or }\rk A(L)\geq \rk A(K)+p-1,$$
where $p$ is the smallest prime dividing $n$.
\end{corollary}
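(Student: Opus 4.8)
The plan is to deduce this corollary from a general fact about rational group representations: if $G$ is a finite group and $p$ is the smallest prime dividing $\#G$, then every nonzero $\Q[G]$-module $W$ with $W^G=0$ satisfies $\dim_\Q W\ge p-1$. Granting this, the corollary is immediate. Since $\Q[G]$ is semisimple in characteristic zero (Maschke) and, as in the proof of Theorem~\ref{cyclic}, $(\Q\otimes A(L))^G=\Q\otimes A(K)$, one can choose a $G$-stable complement and write $\Q\otimes A(L)=(\Q\otimes A(K))\oplus W$ as $\Q[G]$-modules; then $W^G\subseteq W\cap(\Q\otimes A(K))=0$, and $\rk A(L)-\rk A(K)=\dim_\Q W$, which by the fact above is either $0$ or at least $p-1$, where $p$ is the smallest prime dividing $n=\#G=[L:K]$.

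To prove the representation-theoretic statement, I would first reduce to the case of a faithful action. Let $H\subseteq G$ be the kernel of the action of $G$ on $W$. Because $W^G=0$ while $W\neq0$, we have $H\neq G$, so $G/H$ is a nontrivial finite group acting faithfully on $W$; let $q$ be the smallest prime dividing $\#(G/H)$, so that $q\ge p$ since $\#(G/H)$ divides $\#G$. By Cauchy's theorem $G/H$ contains an element $\bar g$ of order $q$, and $\bar g$ acts nontrivially on $W$ by faithfulness.

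Now I would restrict $W$ to the cyclic subgroup $\langle\bar g\rangle\cong\Z/q\Z$ and argue exactly as in the proof of Theorem~\ref{cyclic}: since $\Q[\Z/q\Z]\cong\Q\times\Q(\zeta_q)$ is a product of fields, every $\Q[\Z/q\Z]$-module is a direct sum of copies of the trivial module and of the simple $(q-1)$-dimensional module $\Q(\zeta_q)$, so $W=W^{\bar g}\oplus W'$ with $W'$ a direct sum of copies of $\Q(\zeta_q)$. As $\bar g$ acts nontrivially, $W'\neq0$, hence $\dim_\Q W\ge\dim_\Q W'\ge q-1\ge p-1$, as desired. (In particular this argument simultaneously reproves Theorem~\ref{cyclic}.)

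I do not expect a serious obstacle here. The only points requiring a little care are the reduction to a faithful action, which is exactly what lets Cauchy's theorem produce an element that acts \emph{nontrivially} on $W$, and the standard fact that forming $G$-invariants commutes with $-\otimes_\Q\Q$ on finitely generated abelian groups, underlying the identification $(\Q\otimes A(L))^G=\Q\otimes A(K)$. Notably, no solvability, Feit--Thompson, or character-table input is needed: only Cauchy's theorem and the semisimplicity of $\Q[\Z/q\Z]$.
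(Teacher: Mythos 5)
Your proof is correct, but it takes a genuinely different route from the paper. The paper invokes the Feit--Thompson theorem: since $\Gal(L/K)$ has odd order it is solvable, so $L/K$ refines into a tower of cyclic extensions of prime degrees $p_i\mid n$, and applying Theorem~\ref{cyclic} at each step shows $\rk A(L)-\rk A(K)$ is a non-negative integer combination of the $p_i-1$, whence the bound. You instead prove a clean general lemma -- any nonzero $\Q[G]$-module $W$ with $W^G=0$ has $\dim_\Q W\ge p-1$ for $p$ the smallest prime dividing $\#G$ -- by passing to the faithful quotient $G/H$, producing an element of prime order $q\ge p$ via Cauchy's theorem, and using the decomposition $\Q[\Z/q\Z]\cong\Q\times\Q(\zeta_q)$; all the steps (the Maschke complement, $W^G\subseteq W\cap(\Q\otimes A(K))=0$, and the faithfulness reduction guaranteeing $\bar g$ acts nontrivially) check out. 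Your argument is considerably more elementary, replacing Feit--Thompson by Cauchy's theorem, and it is more general: it applies to an arbitrary finite Galois group, with the conclusion merely becoming vacuous when $2\mid n$. What the paper's tower argument buys in exchange is the finer arithmetic information that the rank increment lies in the additive monoid generated by the $p_i-1$ (a congruence-type restriction analogous to the mod $p-1$ statement of Theorem~\ref{cyclic}), not just the lower bound stated in the corollary.
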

\begin{proof}
By the Feit--Thompson theorem \cite{ft}, $\Gal(L/K)$ is solvable, so there is a sequence of intermediate fields $K=K_0\subset K_1\subset\cdots\subset K_r=L$ such that each extension $K_i/K_{i-1}$ is cyclic of some prime degree $p_i\mid n$. Theorem \ref{cyclic} implies that $\rk A(L)-\rk A(K)$ is a linear combination of the $p_i-1$, and the claim follows.
\end{proof}

We now turn our attention to the case where $G$ is an alternating group.

\begin{tm}
\label{alternating}
Let $A$ be an Abelian variety defined over a number field $K$. Let $n\ge3$ and let $L/K$ be a Galois extension with group $A_n$. Then
$$
\rk A(L)=\rk A(K) \text{ or } \rk A(L)\geq \rk A(K) +
\begin{cases}
2& \hbox{if }n=4,\\
n-1& \hbox{if }n=3\hbox{ or }n\ge5.
\end{cases}
$$
\end{tm}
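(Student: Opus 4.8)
The plan is to argue exactly as in the proof of Theorem~\ref{cyclic}. Put $G=A_n$ and regard $V=\Q\otimes A(L)$ as a module over the group algebra $\Q[G]$. Its isotypic decomposition has trivial part $V^{G}=\Q\otimes A(K)$, of dimension $\rk A(K)$, so that $\rk A(L)-\rk A(K)=\dim_{\Q}(V/V^{G})$ is a sum, with non-negative multiplicities, of the dimensions of the nontrivial irreducible $\Q$-linear representations of $G$ occurring in $V$. Consequently, if $\rk A(L)\neq\rk A(K)$, then $\rk A(L)-\rk A(K)\ge\mu(A_n)$, where $\mu(A_n)$ denotes the least dimension of a nontrivial irreducible $\Q[A_n]$-module; and the theorem follows once we show that $\mu(A_3)\ge2$, $\mu(A_4)\ge2$ and $\mu(A_n)\ge n-1$ for $n\ge5$.

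For $n=3$ we have $A_3\simeq\Z/3\Z$, and Theorem~\ref{cyclic} directly gives that $\rk A(L)-\rk A(K)$ is a multiple of $2=n-1$. For $n=4$, the abelianization $A_4/V_4\simeq\Z/3\Z$ admits no nontrivial homomorphism to $\Q^{\times}$ (whose only elements of finite order are $\pm1$), so $A_4$ has no nontrivial one-dimensional $\Q$-linear representation; hence $\mu(A_4)\ge2$. In fact $\mu(A_4)=2$, the bound being attained by the two-dimensional $\Q$-irreducible $\Q(\zeta_3)$ on which $A_4$ acts through its quotient $\Z/3\Z$.

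Now let $n\ge5$, so that $A_n$ is simple and every nontrivial representation of $A_n$ is faithful. If $W$ is a nontrivial irreducible $\Q[A_n]$-module, then $W\otimes_{\Q}\C$ is a direct sum of Galois-conjugate complex irreducibles, all nontrivial and of one common dimension $d$; thus $\dim_{\Q}W$ is a positive multiple of $d$, and in particular $\dim_{\Q}W\ge\delta_n$, where $\delta_n$ is the smallest dimension of a nontrivial complex irreducible representation of $A_n$. Using the description of the irreducible representations of $A_n$ via partitions of $n$ (the representations of $S_n$ attached to non-self-conjugate partitions restrict irreducibly to $A_n$, while a self-conjugate partition yields one that splits into two), together with the hook-length formula applied to the partitions $(n-1,1)$, $(n-2,2)$, $(n-2,1,1)$ and to the self-conjugate partitions, one finds $\delta_n=n-1$ for every $n\ge6$, attained by the standard representation $\Q^{n}/\Q\cdot(1,\dots,1)$, which is itself defined over $\Q$. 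This already yields $\mu(A_n)\ge n-1$ when $n\ge6$. The remaining case $n=5$ is exceptional: here $\delta_5=3$, but the two three-dimensional complex irreducibles of $A_5$ are conjugate over $\Q(\sqrt5)$ and not individually defined over $\Q$, so any $\Q$-irreducible involving one of them has $\Q$-dimension at least $6$; as $A_5$ has no faithful representation of dimension $1$ or $2$ over $\Q$ (the group $\GL_2(\Q)$ has no element of order $5$), it follows that $\mu(A_5)=4=n-1$.

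The crux of the argument is the representation-theoretic input on $A_n$: establishing that $\delta_n=n-1$ for all $n\ge6$ (equivalently, ruling out small nontrivial complex irreducibles of $A_n$ other than the standard one), and then treating $A_5$ by hand, since there the bound coming from complex representation theory is too weak and one must instead exploit the failure of the three-dimensional representations to be rational.
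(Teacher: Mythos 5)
Your proposal is correct and follows essentially the same route as the paper: reduce the problem to the minimal dimension of a non-trivial irreducible $\Q$-linear representation of $A_n$, handle $n=3$ via Theorem~\ref{cyclic}, $n=4$ by the absence of non-trivial one-dimensional $\Q$-representations, $n=5$ by the failure of the two three-dimensional complex representations to be realizable over $\Q$, and $n\ge6$ by the standard fact that the smallest non-trivial complex irreducible has dimension $n-1$. Your write-up merely supplies more detail (e.g.\ the Galois-conjugacy argument bounding $\Q$-dimensions from below by complex dimensions) than the paper's brief citation of \cite[Exercise 5.5]{fh}.
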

\begin{proof}
As $A_3 \simeq \Z/3\Z$, the case $n=3$ is already proved in Theorem \ref{cyclic}.

The group $A_4$ has two non-trivial complex representations of dimension 1, but their images involve third roots of unity and are therefore are not defined over $\Q$. Hence all non-zero, non-trivial representations of $A_4$ over $\Q$ have dimension $\geq 2$.

The group $A_5$ has two irreducible complex representations of dimension~3,
but these involve fifth roots of unity. The minimal dimension of a non-trivial $\Q$-linear irreducible representation equals 4.

It is well known \cite[Exercise 5.5]{fh} that the dimension of the smallest irreducible $\C$-linear (and hence also $\Q$-linear) representation of $A_n$ is of dimension $n-1$ for $n>5$, completing the proof.
\end{proof}

\begin{remark}
In the setting of Theorem \ref{alternating} for $n=4$, as $A_4$ has one 2-dimensional and one 3-dimensional $\Q$-linear irreducible representation. As every positive integer apart from 1 can be written as the sum of multiples of 2 and 3, it follows that $\rk A(L)- \rk A(K)$ can a priori be any non-negative integer apart from 1.
\end{remark}

Here is a similar result about extensions with Galois group $\SL_2(\F_p)$ or $\PSL_2(\F_p)$.

\begin{tm}
\label{slpsl}
Let $A$ be an Abelian variety over a number field $K$, and let $L$ be a finite Galois extension with group $\SL_2(\F_p)$ or $\PSL_2(\F_p)$ for some prime $p>2$.
Then
$$\rk A(L)= \rk A(K) \text{ or } \rk A(L)\geq \rk A(K))+\frac{p-1}{2}.$$
\end{tm}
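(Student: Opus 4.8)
The strategy mirrors the proofs of Theorems \ref{cyclic} and \ref{alternating}: the increase $\rk A(L)-\rk A(K)$ equals $\dim_\Q \Q\otimes(A(L)/A(K))$, which is a non-negative integer combination of the dimensions of the non-trivial irreducible $\Q$-linear representations of $G=\Gal(L/K)$. So it suffices to show that every non-trivial irreducible $\Q$-linear representation of $G=\SL_2(\F_p)$ (respectively $\PSL_2(\F_p)$) has dimension at least $\frac{p-1}{2}$, and then the claim follows exactly as before. The key input is therefore the (well-known) character table of $\SL_2(\F_p)$, together with a rationality analysis to pass from $\C$-irreducibles to $\Q$-irreducibles.

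First I would recall the list of complex irreducible characters of $\SL_2(\F_p)$. Their degrees are: $1$ (trivial), $p$ (Steinberg), $p+1$ (principal series, $\frac{p-3}{2}$ of them for $p$ odd), $p-1$ (discrete series, $\frac{p-1}{2}$ of them), and the two "halved" characters of degree $\frac{p+1}{2}$ coming from the two components of the reducible principal-series representation at the quadratic character, together with two of degree $\frac{p-1}{2}$ coming from the corresponding discrete-series piece. (Which pair of $\frac{p\pm1}{2}$ characters occurs depends on $p\bmod 4$, but in either case the smallest degree appearing is $\frac{p-1}{2}$.) Thus the minimal degree of a non-trivial \emph{complex} irreducible is exactly $\frac{p-1}{2}$. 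Since a $\Q$-irreducible representation decomposes over $\C$ as a sum of Galois-conjugate complex irreducibles, its $\Q$-dimension is at least the smallest complex degree occurring in it, hence at least $\frac{p-1}{2}$; this already gives the bound for $\SL_2(\F_p)$.

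For $\PSL_2(\F_p) = \SL_2(\F_p)/\{\pm I\}$, I would simply observe that its irreducible representations are those of $\SL_2(\F_p)$ on which $-I$ acts trivially; inspecting the list, the characters of degree $\frac{p\pm1}{2}$ are precisely the ones killed by $-I$ exactly when the relevant congruence condition holds, and in the complementary case the smallest surviving degree is $p-1$ or $p+1$. Either way the minimal non-trivial degree is $\geq \frac{p-1}{2}$, so the same argument applies. Finally, since these two groups are perfect for $p>3$ (and one checks $\SL_2(\F_3)$, $\PSL_2(\F_3)\simeq A_4$ directly, the latter being covered by Theorem \ref{alternating} and the former having minimal faithful $\Q$-representation of dimension $2 = \frac{3-1}{2}\cdot 2 \ge \frac{p-1}{2}$), there is no subtlety about non-faithful quotients contributing small representations through a proper quotient — any such quotient is trivial.

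The main obstacle I anticipate is purely bookkeeping: getting the rationality statement for the two exceptional characters of degree $\frac{p\pm1}{2}$ right. These characters take values in $\Q(\sqrt{\pm p})$ and are interchanged by $\Gal(\Q(\sqrt{\pm p})/\Q)$, so over $\Q$ they fuse into a single irreducible representation of dimension $2\cdot\frac{p\pm1}{2} = p\pm1$; one must be careful that this fusion does not accidentally produce something smaller, and that it is indeed the degree-$\frac{p-1}{2}$ characters (not only the degree-$\frac{p+1}{2}$ ones) that provide the extremal case. Once the character table is in hand this is routine, but it is the only place where care is needed; everything else is the same representation-theoretic packaging used in the previous two theorems.
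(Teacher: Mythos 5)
Your proposal is correct and takes essentially the same route as the paper, which simply cites Fulton--Harris for the fact that every non-trivial irreducible $\Q$-linear representation of $\SL_2(\F_p)$ or $\PSL_2(\F_p)$ has dimension at least $\frac{p-1}{2}$; your derivation of this from the complex character table supplies the details the paper leaves to the reference. The one worry you flag --- that Galois fusion of the degree-$\frac{p\pm1}{2}$ characters might ``accidentally produce something smaller'' --- is moot, since the $\Q$-irreducible containing a given complex irreducible always has dimension at least that of the complex one (Schur indices and Galois orbits only enlarge it), which is all the lower bound requires.
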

\begin{proof}
The minimal dimension of an irreducible non-trivial $\Q$-linear representation of $\SL_2(\F_p)$ is $(p-1)/2$ \cite[Chapter 5.2, pages 71--73]{fh}.
Similarly, the smallest dimension of a non-trivial irreducible $\Q$-linear representation of $\PSL_2(\F_p)$ is $(p-1)/2$ \cite[Exercise 5.10, page 71]{fh}.
\end{proof}

\begin{corollary}
\label{cyc}
Let $E$ be an elliptic curve over a number field $K$, and let $p>2$ be a prime such that the Galois representation
$$
\rho_p\colon \Gal(\overline K /K)\to\GL_2(\F_p).
$$
coming from the action of $\Gal(\overline K/K)$ on $E[p]$ is surjective. Let $L$ be a subfield of $K(E[p])$ with $[K(E[p]):L]\le 2$. Then
$$\rk E(L)= \rk E(K(\zeta_p)) \text{ or } \rk E(L)\geq \rk E(K(\zeta_p))+\frac{p-1}{2}$$
\end{corollary}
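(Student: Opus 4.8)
The plan is to reduce the statement to Theorem~\ref{slpsl} applied to the Galois extension $K(E[p])/K(\zeta_p)$. First I would record the relevant group theory: the Weil pairing identifies the mod~$p$ cyclotomic character of $K$ with $\det\circ\rho_p$, so surjectivity of $\rho_p$ gives $\Gal(K(E[p])/K)\cong\GL_2(\F_p)$, $\Gal(K(\zeta_p)/K)\cong\F_p^\times$, and $\Gal(K(E[p])/K(\zeta_p))\cong\SL_2(\F_p)$. In particular $K(\zeta_p)\subseteq K(E[p])$, and $K(E[p])/K(\zeta_p)$ is Galois with group $\SL_2(\F_p)$.

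The main case is $L\supseteq K(\zeta_p)$. Then $H:=\Gal(K(E[p])/L)$ is a subgroup of $\SL_2(\F_p)$ of order $[K(E[p]):L]\le2$. Since $p>2$, the only involution in $\SL_2(\F_p)$ is $-I$: an element of order $2$ is diagonalisable with eigenvalues in $\{1,-1\}$, and eigenvalues $\{1,-1\}$ would force determinant $-1$. Hence $H$ is trivial or the centre $\{\pm I\}$, so $H\trianglelefteq\SL_2(\F_p)$ and $L/K(\zeta_p)$ is Galois with group $\SL_2(\F_p)$ or $\PSL_2(\F_p)$. Theorem~\ref{slpsl} applied to $L/K(\zeta_p)$ then gives precisely $\rk E(L)=\rk E(K(\zeta_p))$ or $\rk E(L)\ge\rk E(K(\zeta_p))+\tfrac{p-1}{2}$.

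The case I expect to be the real obstacle is when $L\not\supseteq K(\zeta_p)$, i.e.\ $H=\langle\sigma\rangle$ for an involution $\sigma\in\GL_2(\F_p)$ of determinant $-1$ (conjugate to $\mathrm{diag}(1,-1)$). Then $H\cap\SL_2(\F_p)=\{I\}$, so $LK(\zeta_p)=K(E[p])$ and $M:=L\cap K(\zeta_p)$ has index $2$ in $K(\zeta_p)$; writing $K(\zeta_p)=M(\sqrt d)$ gives $K(E[p])=L(\sqrt d)$, and the quadratic-twist decomposition yields $\rk E(K(E[p]))=\rk E(L)+\rk E_d(L)$ and $\rk E(K(\zeta_p))=\rk E(M)+\rk E_d(M)$, with $\rk E(M)\le\rk E(L)$ and $\rk E_d(M)\le\rk E_d(L)$. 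One then has to combine these with Theorem~\ref{slpsl} for $K(E[p])/K(\zeta_p)$ — or, more systematically, with the relations for non-Galois extensions referred to above (here $L/K$ has Galois closure $K(E[p])$ and $\Gal(K(E[p])/L)$ has order $2$) — in order to control $\rk E(L)-\rk E(K(\zeta_p))$; in the intended applications, however, $L$ contains $K(\zeta_p)$, so only the main case above intervenes.
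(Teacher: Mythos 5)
Your main case is exactly the paper's proof: via the Weil pairing and surjectivity of $\rho_p$, identify $\Gal(K(E[p])/K(\zeta_p))$ with $\SL_2(\F_p)$, note that $-I$ is the unique involution of $\SL_2(\F_p)$ for $p>2$, so that $L/K(\zeta_p)$ is Galois with group $\SL_2(\F_p)$ or $\PSL_2(\F_p)$, and apply Theorem~\ref{slpsl}. The second case you isolate, where $\Gal(K(E[p])/L)$ is generated by an involution of determinant $-1$ and hence $L\not\supseteq K(\zeta_p)$, is not a defect of your argument but a genuine imprecision in the paper: its proof simply asserts that for $[K(E[p]):L]=2$ the field $L$ is the fixed field of $-I$, which presupposes $\Gal(K(E[p])/L)\subseteq\SL_2(\F_p)$, whereas $\GL_2(\F_p)$ has many involutions conjugate to $\mathrm{diag}(1,-1)$. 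For such an $L$ the asserted dichotomy can actually fail --- e.g.\ if $\Q\otimes E(K(E[p]))$ contains the one-dimensional representation on which $\GL_2(\F_p)$ acts through the quadratic character of the determinant, that summand contributes $1$ to $\rk E(K(\zeta_p))$ and $0$ to $\rk E(L)$, so one can have $\rk E(L)<\rk E(K(\zeta_p))$ --- so the corollary must be read with the implicit hypothesis $L\supseteq K(\zeta_p)$ (equivalently $\Gal(K(E[p])/L)\subseteq\{\pm I\}$), as in the application $L=K(E[p])$ from the introduction. Under that reading your proof is complete and coincides with the paper's; your sketch for the remaining case need not, and should not, be pursued.
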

\begin{proof}
We note that $\rho_p$ factors through $\Gal(K(E[p])/K)$. By the properties of the Weil pairing, $K(E[p])$ contains $K(\zeta_p)$, and $\rho_p$ identifies $\Gal(K(E[p])/K(\zeta_p))$ with~$\SL_2(\F_p)$.
If $[K(E[p]):L]=2$, then $L$ is the fixed field of~$-I$, the unique element of order~2 in $\SL_2(\F_p)$, and $\rho_p$ identifies $\Gal(L/K(\zeta_p))$ with~$\PSL_2(\F_p)$.
Hence $\Gal(K(E[p])/K(\zeta_p))$ is isomorphic to $\SL_2(\F_p)$ if $L=K(E[p])$, and to $\PSL_2(\F_p)$ if $[K(E[p]):L]=2$. The claim now follows from Theorem \ref{slpsl}.
\end{proof}

\begin{remark}
By Serre's open image theorem \cite{ser}, if $E$ does not have complex multiplication, then the map $\rho_p$ is surjective for all but finitely many primes $p$.
\end{remark}

In view of the results proved in this section, it is natural to wonder what characterizes the groups~$G$ such that for Galois extensions $L/K$ with group~$G$, one can give a non-trivial lower bound on $\rk A(L)-\rk A(K)$, whenever $\rk A(L)-\rk A(K)>0$. The following theorem answers this question.

\begin{tm}
\label{general}
Let $L/K$ be a finite Galois extension of number fields such that $G=\Gal(L/K)$
does not contain a subgroup of index~$2$. Then for any Abelian variety $A$ over~$K$, either $\rk A(L)=\rk A(K)$ or $\rk A(L)\ge\rk A(K)+2$.
\end{tm}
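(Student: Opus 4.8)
The plan is to reduce the problem to a statement purely about the rational representation theory of the finite group $G = \Gal(L/K)$, exactly as in the preceding theorems. Write $V = \Q \otimes A(L)$, a finite-dimensional $\Q$-vector space carrying a linear action of $G$, and recall that $\Q \otimes A(K)$ is identified with the subspace $V^G$ of $G$-fixed vectors. Decompose $V$ as a direct sum of $\Q$-irreducible $G$-representations; the trivial ones together span $V^G$, so $\rk A(L) - \rk A(K)$ equals the sum of the $\Q$-dimensions of the non-trivial irreducible constituents. Hence it suffices to prove the following purely group-theoretic claim: if a finite group $G$ has no subgroup of index $2$, then every non-trivial irreducible $\Q$-linear representation of $G$ has dimension at least $2$. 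Equivalently, the only $\Q$-irreducible representation of $G$ of dimension $1$ is the trivial one.

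The key step is therefore to classify the $1$-dimensional $\Q$-representations of $G$. A $1$-dimensional representation over $\Q$ is a homomorphism $G \to \Q^\times$; since $G$ is finite, the image is a finite subgroup of $\Q^\times$, hence contained in $\{\pm 1\}$. So a non-trivial $1$-dimensional $\Q$-representation is precisely a surjective homomorphism $\chi\colon G \to \{\pm 1\}$, and its kernel is a subgroup of index $2$ in $G$. By hypothesis no such subgroup exists, so the only $1$-dimensional $\Q$-representation of $G$ is trivial. Combining this with the decomposition above, every non-trivial irreducible constituent of $V$ contributes at least $2$ to $\rk A(L) - \rk A(K)$, so this difference is either $0$ (when $V$ has no non-trivial constituent, i.e.\ $A(L) = A(K)$ up to torsion) or at least $2$.

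I do not expect a genuine obstacle here; the argument is short once one isolates the right statement. The only point requiring a moment's care is the correspondence between index-$2$ subgroups of $G$ and surjections $G \to \{\pm 1\}$, together with the observation that over $\Q$ (as opposed to $\C$) the only roots of unity available are $\pm 1$, which is what forces a $1$-dimensional representation to factor through $\{\pm 1\}$. One should also note that an index-$2$ subgroup of a finite group is automatically normal, so ``subgroup of index $2$'' and ``normal subgroup of index $2$'' coincide and the bijection with sign characters is exact; this makes the hypothesis of the theorem precisely the non-existence of a non-trivial $1$-dimensional $\Q$-representation, so the bound $2$ is best possible in the sense that it cannot be improved using $1$-dimensional information alone.
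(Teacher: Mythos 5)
Your proposal is correct and is essentially the same argument as the paper's: the paper also reduces to observing that a non-trivial one-dimensional $\Q$-representation would be a non-trivial homomorphism $G\to\{\pm1\}$, whose kernel would be an index-$2$ subgroup. Your write-up merely spells out the decomposition of $\Q\otimes A(L)$ into irreducibles, which the paper takes as established at the start of the section.
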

\begin{proof}
As $G$ has no subgroup of index~2, there is no non-trivial homomorphism $G\rightarrow \Q^\times_\tors=\{ 1,-1 \}$. Therefore $G$ has no non-trivial irreducible representation of dimension~1 over~$\Q$.
\end{proof}

Note that none of the groups~$G$ considered above has an index 2 subgroup. If $L/K$ is a finite Galois extension for which $\Gal(L/K)$ does have an index 2 subgroup, then $L$ contains $K(\sqrt d)$ for some $d\in K^\times\setminus (K^\times)^2$. If $A$ is an Abelian variety over~$K$, we cannot exclude that $\rk A(L)=\rk A(K(\sqrt d))>\rk A(K)$. Now $\rk A(K(\sqrt d)) =\rk A(K) + \rk A^d(K)$, where $A^d$ is the quadratic twist of $A$ by $d$; in general, we cannot prove any restrictions on $\rk A^d(K)$. So index~2 subgroups form an obstruction for results of the type of Theorems \ref{cyclic}, \ref{alternating} and~\ref{slpsl}; in this sense, Theorem \ref{general} is best possible.

\begin{remark}
The above results rely on the decomposition of $A(L)$ into irreducible representations of $G=\Gal(L/K)$. More conceptually, they can be interpreted via a decomposition of the Weil restriction $B=\Res_{L/K}A_L$ in the category of Abelian varieties over~$K$ up to isogeny, namely
$$
B\sim\bigoplus_\rho B_\rho,
$$
where $\rho$ ranges over the irreducible $\Q$-linear representations of~$G$ and the group algebra $\Q[G]$ acts on $B_\rho$ through a simple quotient algebra $R_\rho$; see \cite[\S\thinspace3.4]{dn} and \cite[Theorem 4.5]{mrs}. Our results can then be explained by the fact that in the situations we consider, $R_\rho$ is strictly larger than~$\Q$ for all non-trivial $\rho$.
\end{remark}


\subsection{Non-Galois extensions}

We start by recalling a bit of representation theory. Let $G$ be a finite group, and let $H\subseteq G$ a subgroup. For finite-dimensional $\Q$-linear representations $V$ of~$G$, we are interested in non-trivial relations between the dimensions of the $\Q$-vector spaces $V^G\subseteq V^H\subseteq V$.

Let $C_G$ denote the set of conjugacy classes of~$G$, and let $\chi_V$ denote the character of the representation~$V$, viewed as a function on~$C_G$. It is well known, as a special case of Schur's orthogonality relations, that the dimension of~$V^G$ equals
\begin{align*}
d_G(\chi_V)&=\frac{1}{\#G}\sum_{g\in G}\chi_V(\text{conjugacy class of }g)\\
&=\frac{1}{\#G}\sum_{c\in C_G}\#c\cdot\chi_V(c).
\end{align*}
We can write $\chi_V=\sum_{\chi\in X(G)}n_\chi\chi$, where $X(G)$ is the set of characters of irreducible $\Q$-linear representations of~$G$ and the $n_\chi$ are non-negative integers. Then we have
\begin{align*}
\dim V^G&=n_{\bf 1},\\
\dim V^H&=\sum_{\chi\in X(G)} n_\chi d_H(\chi),\\
\dim V&=\sum_{\chi\in X(G)} n_\chi d_{\{\id\}}(\chi).
\end{align*}

In the above notation, $d_{\{\id\}}(\chi)=\chi(\{\id\})$ is the dimension of the irreducible representation of~$G$ with character~$\chi$. The results obtained above for Galois extensions $L/K$ with group~$G$ are explained by the fact that in all the cases we considered, $d_{\{\id\}}(\chi)>1$ for all non-trivial irreducible representations $\chi$ of $G$ over~$\Q$.

The explanation of the following theorem is that the pairs $(G,H)$ we consider have the property that $d_{\{\id\}}(\chi)$ is strictly greater than $d_H(\chi)$ for all non-trivial $\chi$. Namely, we note that
\begin{align*}
\dim V^H-\dim V^G&=\sum_{\chi\ne{\bf 1}}n_\chi d_H(\chi),\\
\dim V-\dim V^H&=\sum_{\chi\in X(G)}n_\chi
(d_{\{\id\}}(\chi)-d_H(\chi)).
\end{align*}
If $\dim V^H>\dim V^G$, then $n_\chi$ is non-zero for some $\chi\ne{\bf 1}$, and the contribution of this $\chi$ in the formula for $\dim V-\dim V^H$ shows that $\dim V>\dim V^H$.

We apply the above observations to the Galois group of a normal closure of a non-Galois extension $L/K$ of number fields. For simplicity, we assume $[L:K]\le 4$.

\begin{tm}
Let $L/K$ be an extension of number fields, let $n=[L:K]$, let $M/K$ be a normal closure of $L/K$, and let $G=\Gal(M/K)$. Let $A$ be an Abelian variety over~$K$.
\begin{enumerate}
\item If $n=3$ and $G\simeq S_3$, then $\rk A(M)-\rk A(K)\ge2(\rk A(L)-\rk A(K))$.
\item If $n=4$ and $G\simeq A_4$, then $\rk A(M)-\rk A(K)\ge3(\rk A(L)-\rk A(K))$, and $\rk A(L)$ and $\rk A(M)$ have the same parity.
\item If $n=4$ and $G\simeq S_4$, then $\rk A(M)-\rk A(K)\ge3(\rk A(L)-\rk A(K))$.
\end{enumerate}
\end{tm}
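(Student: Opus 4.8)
The plan is to apply the representation-theoretic setup developed just above with $V=\Q\otimes A(M)$, $G=\Gal(M/K)$ and $H=\Gal(M/L)$. Then $V^G=\Q\otimes A(K)$, $V^H=\Q\otimes A(L)$ and $V$ have $\Q$-dimensions $\rk A(K)$, $\rk A(L)$ and $\rk A(M)$, so writing $\chi_V=\sum_{\chi\in X(G)}n_\chi\chi$ with non-negative integers $n_\chi$ we get $\rk A(K)=n_{\mathbf 1}$, $\rk A(L)=\sum_\chi n_\chi d_H(\chi)$ and $\rk A(M)=\sum_\chi n_\chi\chi(\{\id\})$. Everything thus reduces to computing the numbers $d_H(\chi)$, which depend only on $(G,H)$: the $G$-set $G/H$ is the set of the $n=[L:K]$ embeddings of $L$ into $M$ over $K$, and the (faithful) action of $G$ on it is, in each of our three cases, the natural action of $S_3$ on $3$ letters, of $A_4$ on $4$ letters, or of $S_4$ on $4$ letters, with $H$ the stabilizer of one letter.

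The common feature of these three actions is that they are $2$-transitive, so the permutation module splits over $\Q$ as $\Q[G/H]\cong\mathbf 1\oplus\rho_0$, where $\rho_0$ is the standard representation of dimension $n-1$; it is absolutely irreducible, being $\C$-irreducible by $2$-transitivity. Since $d_H(\chi)=\dim_\Q V_\chi^H$ equals the multiplicity of $V_\chi$ in $\Q[G/H]$ when $V_\chi$ is absolutely irreducible, and vanishes for every $\Q$-irreducible $V_\chi$ not occurring in $\Q[G/H]$, we obtain $d_H(\mathbf 1)=d_H(\rho_0)=1$ and $d_H(\chi)=0$ for all other $\chi\in X(G)$. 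Hence $\rk A(L)-\rk A(K)=n_{\rho_0}$, while
$$\rk A(M)-\rk A(K)=\sum_{\chi\neq\mathbf 1}n_\chi\chi(\{\id\})\ \ge\ n_{\rho_0}\dim\rho_0=(n-1)\bigl(\rk A(L)-\rk A(K)\bigr),$$
which is the claimed bound in all three parts, with $n-1=2$ for $S_3$ and $n-1=3$ for $A_4$ and $S_4$.

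For the parity statement in (2) I would evaluate $\rk A(M)-\rk A(L)=\sum_\chi n_\chi\bigl(\chi(\{\id\})-d_H(\chi)\bigr)$ using the list of $\Q$-irreducible representations of $A_4$. Since $A_4^{\ab}\cong\Z/3\Z$, the two non-trivial $1$-dimensional $\C$-characters of $A_4$ are Galois-conjugate and combine into a single $\Q$-irreducible representation $W$ of dimension $2$, and the remaining $\C$-irreducible has dimension $3$ and is the rational representation $\rho_0$; thus $X(A_4)=\{\mathbf 1,W,\rho_0\}$, and $\chi(\{\id\})-d_H(\chi)$ equals $0$, $2$, $2$ respectively, so $\rk A(M)-\rk A(L)=2(n_W+n_{\rho_0})$ is even. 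For $S_3$ and $S_4$ the sign character contributes $\chi(\{\id\})-d_H(\chi)=1$, so no parity restriction is to be expected, in agreement with the theorem.

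The only real work is recalling the character theory of $S_3$, $A_4$ and $S_4$, together with the elementary fact that the permutation character of a $2$-transitive group is $\mathbf 1+\rho_0$ with $\rho_0$ irreducible; I do not expect any serious obstacle. The one point to phrase carefully is the relation between $\dim_\Q V_\chi^H$ and the multiplicity of $V_\chi$ in $\Q[G/H]$ for $\Q$-irreducible representations that fail to be absolutely irreducible — but the representations that matter here, $\mathbf 1$ and $\rho_0$, are absolutely irreducible, so this subtlety never bites.
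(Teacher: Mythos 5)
Your proposal is correct and follows essentially the same route as the paper: both decompose $\Q\otimes A(M)$ into $\Q$-irreducible representations of $G$ and reduce everything to the dimensions $d_H(\chi)$ of the $H$-invariants, with the identical parity computation for $A_4$. The only difference is that the paper checks the values $d_H(\chi)$ case by case for each irreducible, whereas you obtain them all at once from $2$-transitivity of $G$ on $G/H$ (so that $\Q[G/H]\cong\mathbf 1\oplus\rho_0$ with $\rho_0$ absolutely irreducible) together with Frobenius reciprocity --- a slicker and equally valid derivation of the same numbers.
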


\begin{proof}
Let $H=\Gal(M/L)\subseteq G$, so that $[G:H]=n$. We identify $G$ with a transitive subgroup of~$S_n$ acting on~$\{1,2,\ldots,n\}$, in such a way that $H$ is the stabilizer of~$1$. We put $V=A(M)$, so that $A(L)=V^H$ and $A(K)=V^G$.

First let $L/K$ be a non-cyclic extension of degree~3, so that $G=S_3$ and $H=\{\id,(23)\}\subset G$. The group~$S_3$ has three irreducible representations over~$\Q$ (the situation is the same as over~$\C$): the trivial representation~${\bf 1}$, the sign representation~$\epsilon$, and a unique two-dimensional representation~$\rho$, namely the obvious permutation representation of~$S_3$ on $\{(x_1,x_2,x_3)\in\Q^3\mid x_1+x_2+x_3=0\}$. One can check easily that the $H$-invariant subspaces of ${\bf 1}$, $\epsilon$, $\rho$ are of dimension 1, 0, 1, respectively. This implies that if
$$
V\simeq n_{\bf 1}\cdot{\bf 1}\oplus n_\epsilon\cdot\epsilon
\oplus n_\rho\cdot\rho,
$$
then
\begin{align*}
\dim V^G&=n_{\bf 1},\\
\dim V^H&=n_{\bf 1}+n_\rho,\\
\dim V&=n_{\bf 1}+n_\epsilon+2n_\rho.
\end{align*}
This is equivalent to (1).

Let $V_4$ denote the unique normal subgroup of order~$4$ of~$S_4$; more concretely,
$$
V_4=\{\id,(12)(34),(13)(24),(14)(23)\}\subset A_4\subset S_4.
$$

Let us now consider $G=A_4$. Then we have $H=\langle(234)\rangle$ and $G=V_4\rtimes H$. The group~$A_4$ has three irreducible representations over~$\Q$: the trivial representation~${\bf 1}$, the direct sum of the two non-trivial one-dimensional representations $\epsilon$ and~$\bar\epsilon$ of~$A_4/V_4$ (each of which is defined over~$\Q(\zeta_3)$), and the standard 3-dimensional representation $\rho_3$. One checks that the $H$-invariant subspaces of ${\bf 1}$, $\epsilon+\bar\epsilon$, $\rho_3$ are of dimension 1, 0, 1, respectively. This proves (2).

Finally, we consider $G=S_4$. Then we have $H\simeq S_3$ and $G=V_4\rtimes H$. The group~$S_4$ has five irreducible representations, both over~$\C$ and
over~$\Q$: the trivial representation~${\bf 1}$, the sign representation~$\epsilon$, a two-dimensional representation~$\rho_2$ arising via the surjection $S_4\to S_3$ from the two-dimensional representation~$\rho$ of~$S_3$, the standard 3-dimensional representation $\rho_3$, and the 3-dimensional representation $\epsilon\otimes\rho_3$. One checks that the $H$-invariant subspaces of ${\bf 1}$, $\epsilon$, $\rho_2$, $\rho_3$, $\epsilon\otimes\rho_3$ are of dimension 1, 0, 0, 1, 0, respectively. This proves (3).
\end{proof}

Finally, we note a curious property of certain quadratic twists of Abelian varieties over quadratic extensions of number fields.

\begin{proposition}
\label{prop-v4d4}
Let $L/K$ be a quadratic extension of number fields, and let $A$ be an Abelian variety of dimension~$g$ over~$L$. Let $B=\Res_{L/K} A$, and assume that $\Q\otimes\End_K B$ contains a number field of some degree~$n$. Let $\delta\in K^\times$, and let $M$ be the Galois closure of $L(\sqrt\delta)$ over~$K$.
\begin{enumerate}
\item The rank of $A(L)$ is divisible by~$n$.
\item If $\Gal(M/K)\simeq V_4$, then $\rk A^\delta(L)$ is divisible by~$n$.
\item If $\Gal(M/K)\simeq D_4$, then $2\rk A^\delta(L)$ is divisible by~$n$.
\end{enumerate}
\end{proposition}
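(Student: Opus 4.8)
My plan is to deduce all three parts from one elementary observation: if $C$ is an Abelian variety over $K$ and $\Q\otimes\End_K C$ contains a number field~$F$ of degree~$n$, then $\Q\otimes C(K)$ is a module over $\Q\otimes\End_K C$, hence an $F$-vector space, so $\rk C(K)$ is divisible by~$n$. I will combine this with the identity $\rk(\Res_{L/K}C')(K)=\rk C'(L)$ and with two elementary compatibilities: quadratic twisting by an element of $K^\times$ commutes with $\Res_{L/K}$, and such a twist leaves $\Q\otimes\End_K$ unchanged (its cocycle is central). Part~(1) is then immediate on taking $C=B=\Res_{L/K}A$.

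For (2) and~(3) I set $N=L(\sqrt\delta)$ and may assume $[N:L]=2$, since otherwise $A^\delta\cong_L A$ and both statements follow from~(1). Let $\tau$ generate $\Gal(L/K)$ and let $A^\tau$ be the conjugate of~$A$ over~$L$; a short computation gives $\rk A^\tau(L)=\rk A(L)$, the isomorphism $(A^\tau)^\delta\cong_L(A^{\tau(\delta)})^\tau$, and hence $\rk(A^\tau)^\delta(L)=\rk A^{\tau(\delta)}(L)$. Since $L/K$ is Galois of degree~$2$ and $L\subseteq N$, one has $L\otimes_K N\cong N\times N$ and $L\otimes_K L\cong L\times L$, so $B(N)=A(N)\times A^\tau(N)$ and $B(L)=A(L)\times A^\tau(L)$. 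Applying the elementary observation to $C=B$ over~$N$ and over~$L$, and subtracting (using $\rk A(N)-\rk A(L)=\rk A^\delta(L)$ and the analogous identity for $A^\tau$), yields the key congruence
\[\rk A^\delta(L)+\rk A^{\tau(\delta)}(L)\equiv 0\pmod n.\]

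For~(2): if $\Gal(M/K)\simeq V_4$ then $N$ is biquadratic over~$K$, so $N=L(\sqrt d)$ for some $d\in K^\times$ (a generator of a quadratic subfield of~$N$ other than~$L$); hence $A^\delta\cong_L A^d$ and $\rk A^\delta(L)=\rk A^d(L)=\rk\bigl((\Res_{L/K}A)^d\bigr)(K)$, which is divisible by~$n$ because $\Q\otimes\End_K\bigl((\Res_{L/K}A)^d\bigr)=\Q\otimes\End_K B\supseteq F$. For~(3) I split into two cases. If $\Hom_L(A,A^\tau)=0$, then $\Res_{L/K}A\otimes_K L\cong A_L\times A^\tau_L$ with no homomorphisms between the two factors, so $\Q\otimes\End_K B\cong\Q\otimes\End_L A=\Q\otimes\End_L A^\delta$; thus $F$ acts on $\Q\otimes A^\delta(L)$ and in fact $n\mid\rk A^\delta(L)$. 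If instead there is a non-zero isogeny $A\to A^\tau$ over~$L$, twisting it by~$\delta$ produces an $L$-isogeny $A^\delta\to(A^\tau)^\delta$, so $\rk A^\delta(L)=\rk(A^\tau)^\delta(L)=\rk A^{\tau(\delta)}(L)$, and the key congruence gives $2\rk A^\delta(L)\equiv 0\pmod n$.

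The steps I expect to need the most care are the key congruence and the dichotomy in~(3): one must correctly identify the base changes $B\otimes_K N$ and $B\otimes_K L$ as products of $A$ and its conjugate, handle the interplay $(A^\tau)^\delta\cong_L(A^{\tau(\delta)})^\tau$ of conjugation and twisting, and check that an isogeny defined over~$L$ survives a quadratic twist (because the twisting cocycle is $\pm1$, hence central). The hypothesis on $\Gal(M/K)$ is really only used in~(2), where $V_4$ forces the class of~$\delta$ in $L^\times/(L^\times)^2$ to descend to $K^\times/(K^\times)^2$; when $\Gal(M/K)\simeq D_4$ that descent fails, and the key congruence — which holds unconditionally — is exactly what survives, accounting for the factor~$2$.
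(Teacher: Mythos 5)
Your proof is correct, and for part (3) it takes a genuinely different route from the paper. Parts (1) and (2) match the paper's argument in substance: the paper also derives (1) from the $\End_K B$-module structure on $B(K)$, and for (2) it writes $\rk A^\delta(L)=\rk B(K(\sqrt e))-\rk B(K)$ with $e\in K^\times$, which is the same computation as your $\rk A^\delta(L)=\rk B^d(K)$ after the twisting identity. For (3), however, the paper passes to the full degree-$8$ Galois closure $M$ and decomposes $\Q\otimes A(M)$ into irreducible $\Q$-representations of $D_4$, reading off the ranks over $L$, $M_0$, $L(\sqrt\delta)$, $M$ as $a+c$, $a+b+c+d$, $a+c+e$, $a+b+c+d+2e$ and concluding $n\mid 2e$ from the divisibility of the ranks over the three fields of the form $L\otimes_K N$. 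You instead never leave $N=L(\sqrt\delta)$: your key congruence $\rk A^\delta(L)+\rk A^{\tau(\delta)}(L)\equiv0\pmod n$ is exactly the statement $n\mid\rk B(N)-\rk B(L)$ unpacked via $B_L\cong A\times A^\tau$, and the dichotomy on $\Hom_L(A,A^\tau)$ then closes the argument. This buys you two things the paper's proof does not make explicit: (3) holds without any hypothesis on $\Gal(M/K)$ (only $[N:L]=2$ is used), and in the case $\Hom_L(A,A^\tau)=0$ you get the stronger conclusion $n\mid\rk A^\delta(L)$. The paper's representation-theoretic bookkeeping, on the other hand, generalizes more readily to other Galois groups and records all four ranks at once. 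One point worth making explicit: as literally stated the proposition has $\delta\in K^\times$, which would force $L(\sqrt\delta)/K$ to be Galois with group inside $V_4$ and make case (3) vacuous; the intended hypothesis is $\delta\in L^\times$, and your proof (which distinguishes $\delta$ from $\tau(\delta)$) silently and correctly adopts that reading.
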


\begin{proof}
Our assumption on $\End_K B$ implies that $\rk A(L)=\rk B(K)$ is divisible by~$n$, so (1) is clear.

If $\Gal(M/K)\simeq V_4$, the field~$M$ is equal to $L(\sqrt\delta)$, and $M$ is also of the form $L(\sqrt e)$ with $e\in K$. Hence
\begin{align*}
\rk A^\delta(L)&=\rk A(M)-\rk A(L)\\
&=\rk B(K(\sqrt e))-\rk B(K).
\end{align*}
By assumption, both terms on the right-hand side are divisible by~$n$, implying (2).

If $\Gal(M/K)\simeq D_4$, the field~$M$ is a quadratic extension of $L(\sqrt\delta)$. Let $M_0$ be the unique $V_4$-extension of $K$ contained in $M$. By looking at the irreducible representations of~$D_4$, one can show that there are non-negative integers $a$, $b$, $c$, $d$, $e$ such that
\begin{align*}
\rk A(M) &= a + b + c + d + 2e,\\
\rk A(M_0) &= a + b + c + d,\\
\rk A(L(\sqrt\delta)) &= a + c + e,\\
\rk A(L) &= a + c.
\end{align*}
We note that $L$, $M_0$, $M$ (but not $L(\sqrt\delta)$) are all of the form $L\otimes_K N$ for some number field~$N$. This implies that the ranks of $A(L)$, $A(M)$, and $A(M)$ are all divisible by~$n$. Therefore $2e$ is divisible by~$n$. Furthermore,
\begin{align*}
\rk A^\delta(L)&=\rk A(L(\delta))-\rk A(L)\\
&=e,
\end{align*}
which proves (3).
\end{proof}

\section{$\Q$-curves and ranks of twists}
\label{sec3}

The question whether the rank of an elliptic curve over $\Q$ can be arbitrarily large is one of the most important open problems concerning elliptic curves. Somewhat similar questions are: how large can the rank of a twist of a fixed elliptic curve $E/\Q$ be, and what is the largest $n$ such that $E$ has infinitely many twists with rank at least $n$? The best known result about the latter question for an arbitrary $E/\Q$ is that there exist infinitely many twists of $E$ with rank at least $2$ \cite{mes1}. There exist elliptic curves with infinitely many twists of rank at least $4$ \cite{mes2,rs}. If one assumes the parity conjecture, then there are also elliptic curves over $\Q$ with infinitely many quadratic twists of rank 5 \cite{rs}.

In this section, for arbitrarily large $n$, we construct elliptic curves $E_n$ over number fields~$K_n$ such that the endomorphism ring of the Weil restriction of scalars $\Res_{K_n/\Q}E_n$ contains an order in a number field of degree $2^n$. We also study the problem of constructing, for arbitrarily large $n$, elliptic curves over number fields admitting infinitely many quadratic twists whose rank is a positive multiple of~$2^n$. We ask the question whether every elliptic curve $E/K$ has infinite rank over $K\Q(2)$, where $\Q(2)$ is the compositum of all quadratic extensions of~$\Q$. A positive answer would imply that the elliptic curves $E_n/K_n$ just mentioned have infinitely many quadratic twists whose rank is a positive multiple of~$2^n$.

The ideas are inspired by \cite{bbd}, where it was proved that every elliptic curve $E$ with a point of order 13 or 18 over a quadratic field $K$ has even rank. The reason for this is that the endomorphism ring of $\Res_{K/\Q} E$ contains $\Z[\sqrt d]$, where $d$ is not a square. This forces $(\Res_{K/\Q} E)(\Q)\simeq E(K)$ to be a $\Z[\sqrt d]$-module. Hence, $E(K)$ is of even rank.
The result mentioned in the previous paragraph shows that one can similarly construct elliptic curves over number fields whose rank is divisible by integers larger than~2.

A \emph{$\Q$-curve} is an elliptic curve $E$ over~$\overline\Q$ that is $\overline \Q$-isogenous to $\lsup \sigma E$ for all $\sigma \in \Gal(\overline \Q /\Q)$.
An interesting property of $\Q$-curves is the fact that the rich structure of these curves has consequences for their rank. For example, the proof that all elliptic curves over quadratic fields with a point of order 13 or 18 have even rank \cite{bbd} uses the fact that all such curves are in fact $\Q$-curves.
A good and thorough account of the properties of the endomorphism algebras of the restrictions of scalars of $\Q$-curves can be found in \cite{que}.

\begin{proposition}
\label{endcyc}
For every integer $n\geq 2$, there exists an elliptic curve $E$ over a number field $K$ such that $\Q\otimes\End_\Q (\Res_{K/\Q} E)$ contains a number field of degree~$2^n$.
\end{proposition}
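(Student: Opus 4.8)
The plan is to realize the required field as a direct factor of the rational group algebra of a cyclic Galois group, via the decomposition of Weil restrictions into $\Q$-isogeny factors used in \cite{dn,mrs} (cf.\ the Remark on Weil restrictions in Section~\ref{sec2}). First, note that for any elliptic curve $E_0$ over $\Q$ and any finite Galois extension $K/\Q$ with group $G$, the group $G$ acts on $B:=\Res_{K/\Q}\bigl((E_0)_K\bigr)$ by $\Q$-rational automorphisms: over $\Qbar$ one has $B_{\Qbar}\cong\prod_{\iota\colon K\hookrightarrow\Qbar}(E_0)_{\Qbar}$, with $G$ permuting the factors through its regular representation, and this permutation commutes with the descent datum defining the $\Q$-structure of~$B$. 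Since the regular representation of $G$ is faithful, one obtains an embedding $\Q[G]\hookrightarrow\Q\otimes\End_\Q B$; equivalently, in the decomposition $B\sim\bigoplus_\rho B_\rho$, the algebra $\Q[G]$ acts on each $\Q$-simple factor through one of its simple quotient algebras.

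It then suffices to choose $G$ so that $\Q[G]$ contains a number field of degree~$2^n$, together with a number field $K$ realising it. Take $G\cong\Z/2^{n+1}\Z$: then $\Q[\Z/2^{n+1}\Z]\cong\prod_{d\mid 2^{n+1}}\Q(\zeta_d)$, one of whose factors is the cyclotomic field $\Q(\zeta_{2^{n+1}})$, of degree $\varphi(2^{n+1})=2^n$. A field $K/\Q$ that is cyclic of degree $2^{n+1}$ exists unconditionally --- for instance the maximal totally real subfield $\Q(\zeta_{2^{n+3}})^+$, whose Galois group over $\Q$ is $(\Z/2^{n+3}\Z)^\times/\{\pm1\}\cong\Z/2^{n+1}\Z$, or the unique subfield of degree $2^{n+1}$ of $\Q(\zeta_p)$ for a prime $p\equiv1\pmod{2^{n+1}}$. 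Taking $E=(E_0)_K$ for any elliptic curve $E_0$ over $\Q$, the composite $\Q(\zeta_{2^{n+1}})\hookrightarrow\Q[\Gal(K/\Q)]\hookrightarrow\Q\otimes\End_\Q(\Res_{K/\Q}E)$ proves the proposition.

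For the statement as phrased I do not expect a serious obstacle: the content is just the observation that the rational group algebra of a cyclic group of order $2^{n+1}$ has the cyclotomic field $\Q(\zeta_{2^{n+1}})$, of degree $2^n$, as a direct factor, together with the (standard) existence of cyclic $2$-power extensions of $\Q$. The genuinely delicate point --- where the analogy with \cite{bbd} bites, and which matters for the later applications to ranks of quadratic twists --- is to arrange in addition that $\Res_{K/\Q}E$ be isogenous to a power of a \emph{single} abelian variety of $\GL_2$-type with cyclotomic endomorphism field, so that the rank of $E$ over $K$ (and of its twists) becomes divisible by~$2^n$. That would require $E$ to be a genuine, non-base-changed $\Q$-curve over a field of degree exactly~$2^n$, with the isogenies $\lsup{\sigma}{E}\to E$ chosen so that the associated class in $H^2(\Gal(K/\Q),\Qbar^\times)$ vanishes, using the structure theory of \cite{que}; but this refinement is not needed for Proposition~\ref{endcyc}.
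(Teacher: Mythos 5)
There is a genuine gap, and it is exactly the point you dismiss in your last paragraph as ``not needed''. With your construction $E=(E_0)_K$ for $E_0$ over $\Q$ (non-CM, say) and $K/\Q$ cyclic of degree $2^{n+1}$, one has $\Q\otimes\End_\Q(\Res_{K/\Q}E)\cong\Q[G]\cong\prod_{j=0}^{n+1}\Q(\zeta_{2^j})$. The factor $\Q(\zeta_{2^{n+1}})$ sits in this product only as a \emph{non-unital} direct factor: its identity element is a proper idempotent, not the identity endomorphism. In fact the only subfield of $\Q[\Z/2^{n+1}\Z]$ containing the identity is $\Q$ itself, since a unital subfield must inject under every projection, in particular into the factor $\Q(\zeta_1)=\Q$. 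The proposition is used (in Proposition~\ref{prop-v4d4}(1) and in the first step of the proof of Theorem~\ref{maintm}) precisely in the unital sense: containment of a degree-$2^n$ field $B$ with the same identity makes $\Q\otimes E(K)=\Q\otimes(\Res_{K/\Q}E)(\Q)$ a $B$-vector space and hence forces $2^n\mid\rk E(K)$. Your $E$ visibly fails this consequence --- $\rk E(K)=\rk E_0(K)$ can perfectly well equal $1$ --- which confirms that the non-unital containment you establish is not the statement the paper needs. Under the intended reading your argument proves nothing beyond the trivial fact that $\Q(\zeta_{2^{n+1}})$ is a direct factor of a cyclic group algebra.

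The paper's proof is correspondingly not a soft decomposition argument: it takes a genuine $\Q$-curve $E_a$ from Quer's family parametrized by $X^*(3)$, chooses a prime $p\equiv2\pmod3$ with $p\equiv1\pmod{2^n}$, a Dirichlet character $\epsilon$ of $2$-power order and conductor $4p$, and a twist $E_{-p}^{\gamma}$ over $K=K_\epsilon(\sqrt{-p})$ so that by \cite[page~312~(d)]{que} the \emph{entire} algebra $\Q\otimes\End_\Q\Res_{K/\Q}E_{-p}^\gamma$ is the field $\Q(\zeta_{2^{\nu+1}},\sqrt3)$ of degree at least $2^{n+1}$. Arranging for the endomorphism algebra to be (or unitally contain) a single large field is where the splitting of the relevant cohomology class and the structure theory of \cite{que} enter; this is the actual content of the proposition and cannot be obtained from a base-changed curve.
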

\begin{proof}
Most of the work needed for this proposition has already been done in \cite[pages 309--312]{que}. Let
\begin{equation}
\label{ela}
E_a\colon y^2=x^3-3\sqrt a (4+5\sqrt a)x +2\sqrt a (2+14\sqrt a +11 a),
\end{equation}
where $a$ is a non-square rational number, be a member of the family of $\Q$-curves parametrized by $X^*(3)$ (the quotient of $X_0(3)$ by the Atkin--Lehner involution $w_3$); see \cite[page~309]{que}. Let $p$ be a prime such that $p\equiv 2 \pmod 3$ and $p\equiv 1 \pmod {2^n}$; there exists infinitely many such primes by the Chinese remainder theorem and Dirichlet's theorem on primes in arithmetic progressions. Note also that this prime satisfies $p\equiv 5 \pmod{12}$.

Let us write $\nu=\ord_2(p-1)$. Let $\epsilon$ be a Dirichlet character of order $2^\nu$ and conductor $4p$. Such a character exists, since $(\Z / 4p\Z)^\times \simeq \Z/2 \Z \oplus \Z/ (p-1) \Z$ and an element $(1,t)$, where $t$ is of order $2^\nu$, written in additive notation, has the desired properties. We write $K=K_\epsilon(\sqrt{-p})$, where $K_\epsilon$ is the splitting field of $\epsilon$.

As explained in \cite[page~312 (d)]{que}, under these assumptions, there exists an element $\gamma\in K^\times$ such that $E_{-p}^{\gamma}$ satisfies
$$\Q\otimes \End_\Q \Res_{K/\Q} E_{-p}^{\gamma}\simeq \Q (\zeta_{2^{\nu+1}},\sqrt 3).$$
Note that $\nu\geq n$, so $[\Q (\zeta_{2^{\nu+1}},\sqrt 3):\Q]\geq 2 \phi(2^{n+1})=2^{n+1}$. The claim follows.
\end{proof}

\begin{remark}
Let $\Q(2)$ be the compositum of all quadratic extensions of $\Q$. Note that the elliptic curve $E_a$ from \eqref{ela} is defined over a quadratic field. By \cite[Theorem 5]{iml}, $E_a(\Q(2))$ has infinite rank. (The statement of loc.\ cit.\ is that $E_a(\Q^\ab)$ has infinite rank, but the proof shows in fact that already $E_a(\Q(2))$ has infinite rank.) This implies that there exist infinitely many quadratic twists $E_a^d$ with $d$ a rational integer, pairwise non-isomorphic over $\Q(\sqrt a)$, such that $E_a^d(\Q(\sqrt a))$ has positive rank. Let $S$ be the set of such integers $d$. Since for any finite extension $F/\Q(\sqrt{a})$ the set of $d\in \Q$ with $\sqrt d \in F$ is finite, it follows that also over $F$ there are infinitely many $d\in F^\times/(F^\times)^2$ such that $E_a^d(F)$ has positive rank.
\end{remark}

If $E$ is a $\Q$-curve and $K\subset\overline\Q$ is a number field, we say that $E$ is \emph{completely defined over~$K$} if $E$ and all isogenies $E\rightarrow \lsup\sigma E$, for all $\sigma \in \Gal(\overline \Q / \Q)$, are defined over $K$.

\begin{proposition}
\label{ext}
Let $E$ be a $\Q$-curve completely defined over a number field~$K$ such that $\Q\otimes \End_\Q(\Res_{K/\Q} E)$ contains a number field~$B$. For every number field~$N$ which can be written as $K\otimes_\Q N'$ for some number field $N'$, $\Q\otimes\End_\Q (\Res_{N/\Q} E)$ is a $B$-vector space.
\end{proposition}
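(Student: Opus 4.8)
The plan is to deduce the statement from two standard facts about restriction of scalars: its transitivity in towers, and its compatibility with base change. Write $A=\Res_{K/\Q}E$, an Abelian variety over~$\Q$, so that the hypothesis reads $B\subseteq\Q\otimes\End_\Q A$. Fix a number field $N'$ with $N\cong K\otimes_\Q N'$; then $[N:K]=[N':\Q]$, and the isomorphism $N\cong K\otimes_\Q N'$ is precisely what exhibits $\Spec N\to\Spec N'$ as the base change of the finite \'etale morphism $\Spec K\to\Spec\Q$ along $\Spec N'\to\Spec\Q$. Consequently the base change $E\times_K N$ (which is what $\Res_{N/\Q}E$ refers to) is, as a scheme over~$N'$, the base change of $E$ viewed as a $\Q$-scheme. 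The base-change compatibility of Weil restriction then gives an isomorphism $\Res_{N/N'}(E\times_K N)\cong A\times_\Q N'$ of Abelian varieties over~$N'$, and combining this with transitivity, $\Res_{N/\Q}=\Res_{N'/\Q}\circ\Res_{N/N'}$, yields
\[
\Res_{N/\Q}E\;\cong\;\Res_{N'/\Q}\bigl(A\times_\Q N'\bigr).
\]

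It remains to exhibit the $B$-module structure, which I would obtain purely from functoriality. Base change of endomorphisms defines a unital ring homomorphism $\End_\Q A\to\End_{N'}(A\times_\Q N')$, and since $\Res_{N'/\Q}$ is a product-preserving functor it carries endomorphisms of an $N'$-group scheme to endomorphisms of its Weil restriction, giving a unital ring homomorphism $\End_{N'}(A\times_\Q N')\to\End_\Q\bigl(\Res_{N'/\Q}(A\times_\Q N')\bigr)=\End_\Q(\Res_{N/\Q}E)$. Composing and tensoring with~$\Q$, I get a unital $\Q$-algebra homomorphism $\Q\otimes\End_\Q A\to\Q\otimes\End_\Q(\Res_{N/\Q}E)$. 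Restricting it to the subfield $B$ gives a unital ring homomorphism out of a field, hence an injection; thus $\Q\otimes\End_\Q(\Res_{N/\Q}E)$ is a $B$-algebra, in particular a $B$-vector space.

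The argument is almost entirely formal, and the one place where care is needed is the identification $\Res_{N/N'}(E\times_K N)\cong A\times_\Q N'$: one must check that the hypothesis ``$N$ is of the form $K\otimes_\Q N'$'' is exactly the hypothesis under which the base-change formula for restriction of scalars applies. It is worth stressing that one should \emph{not} expect $\Res_{N/\Q}E$ to be isomorphic, or even isogenous over~$\Q$, to a power of $A=\Res_{K/\Q}E$ --- already for $N'$ quadratic, $\Res_{N'/\Q}(A\times_\Q N')$ is isogenous to $A$ times a nontrivial twist of~$A$ --- so it really is the functoriality of $\Res_{N'/\Q}$, rather than any product decomposition, that produces the $B$-structure. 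Finally, I would note that the proof uses only the inclusion $B\subseteq\Q\otimes\End_\Q\Res_{K/\Q}E$; the hypotheses that $E$ is a $\Q$-curve completely defined over~$K$ serve only to guarantee, via Proposition~\ref{endcyc} and its proof, that such a field~$B$ can be taken large.
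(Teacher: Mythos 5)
Your argument is correct, and it proves the proposition exactly as stated: via transitivity and base-change compatibility of Weil restriction you identify $\Res_{N/\Q}E$ with $\Res_{N'/\Q}(A\times_\Q N')$ for $A=\Res_{K/\Q}E$, and then functoriality gives a unital map $B\hookrightarrow\Q\otimes\End_\Q(\Res_{N/\Q}E)$, hence a $B$-vector space structure. The paper's proof uses the same essential input --- the hypothesis $N\cong K\otimes_\Q N'$ fed into the defining adjunction of Weil restriction --- but applies it at the level of points rather than endomorphisms: it writes $E(N)=E(K\otimes_\Q N')\simeq(\Res_{K/\Q}E)(N')$ and observes that the $B$-action on $A$ by $\Q$-endomorphisms (a fortiori $N'$-endomorphisms) makes $\Q\otimes E(N)$ a $B$-vector space. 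So the paper in fact proves the consequence that is used later (divisibility of $\rk E(N)$ by $[B:\Q]$) rather than the literal statement about $\Q\otimes\End_\Q(\Res_{N/\Q}E)$; your version proves the literal statement and the point-level consequence follows by evaluating at $\Q$-points. Your approach costs slightly more machinery (base change of $\Res$ along $\Spec N'\to\Spec\Q$, which you correctly flag as the one point needing care, and which is exactly where the hypothesis $N=K\otimes_\Q N'$ enters), and buys the stronger, more structural conclusion; your closing remarks --- that $\Res_{N/\Q}E$ need not be a power of $A$, and that only the inclusion $B\subseteq\Q\otimes\End_\Q A$ is used --- are both accurate.
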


\begin{proof}
Let $N'$ be a number field such that $N=K\otimes_\Q N'$ is also a field. Then
$$E(N)=E(K\otimes_\Q N')\simeq\Res_{K/\Q}E(N').$$
As $\Q\otimes \End_{N'} (\Res_{K/\Q}E)$ contains $B$, it follows that $\Q\otimes E(N) \simeq \Q\otimes \Res_{K/\Q}E(N')$ has a natural $B$-vector space structure.
\end{proof}

Let $\Q(2)$ be the compositum of all quadratic extensions of $\Q$. The following question is a variant of \cite[Question 2]{iml}.

\begin{question}
\label{as2}
Does every elliptic curve over a number field~$K$ have infinite rank over $K\Q(2)$?
\end{question}

\begin{remark}
Suppose that every Abelian variety over $\Q$ has infinite rank over $\Q(2)$; this is a variant of \cite[page 127, Problem]{fj}. Then by taking the Weil restriction, we obtain a positive answer to Question \ref{as2}.
\end{remark}

\begin{tm}
\label{maintm}
Suppose that Question \ref{as2} has a positive answer. Let $n$ be a positive integer. There exist a number field $K$ and an elliptic curve $E$ over $K$ possessing infinitely many twists $E^d$ over $K$ such that $\rk E^d(K)$ is a positive multiple of $2^n$.
\end{tm}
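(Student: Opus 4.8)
The plan is to combine Proposition~\ref{endcyc} with Proposition~\ref{ext} and the hypothesis that Question~\ref{as2} has a positive answer. By Proposition~\ref{endcyc}, fix an elliptic curve $E_n$ over a number field $K_n$ such that $\Q\otimes\End_\Q(\Res_{K_n/\Q}E_n)$ contains a number field $B$ of degree $2^n$; inspecting the proof, $E_n$ is a quadratic twist $E_{-p}^\gamma$ of a $\Q$-curve from the family \eqref{ela}, completely defined over $K_n$, so $E_n$ is a $\Q$-curve completely defined over $K_n$. Set $K=K_n$ and $E=E_n$. I would then look for number fields $N$ of the form $K\otimes_\Q N'$ on which $E$ acquires rank; by Proposition~\ref{ext}, $\Q\otimes E(N)$ is then a $B$-vector space, so $\rk E(N)$ is a multiple of $2^n$.

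The key step is to produce \emph{infinitely many} quadratic twists of $E$ over $K$ of positive rank, with the rank of each a positive multiple of $2^n$. First I would apply the hypothesis to $E$ over $K$: Question~\ref{as2} gives that $E(K\Q(2))$ has infinite rank. Since $K\Q(2)$ is the union of the fields $K\otimes_\Q N'$ as $N'$ ranges over the finite multiquadratic extensions of $\Q$ (which are precisely the finite subextensions of $\Q(2)/\Q$, and remain fields after tensoring with $K$), there must be infinitely many such $N'$ with $\rk E(K\otimes_\Q N')$ strictly increasing, hence in particular $\rk E(KN')>\rk E(K)$ for infinitely many multiquadratic $N'$. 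By Proposition~\ref{ext}, each such rank is a multiple of $2^n$. I would now cut these multiquadratic extensions down to quadratic twists: for a multiquadratic $N'=\Q(\sqrt{d_1},\dots,\sqrt{d_k})$, the group $\Gal(KN'/K)$ is an elementary abelian $2$-group, and $\Q\otimes E(KN')$ decomposes as a direct sum of pieces indexed by the characters of this group, each piece being $\Q\otimes E^d(K)$ for the corresponding $d\in\{1,d_1,\dots,d_1\cdots d_k,\dots\}$; each such $E^d(K)$ is again the $K$-points of the Weil restriction $\Res_{K/\Q}$ of a completely-defined $\Q$-curve quadratic twist, so Proposition~\ref{ext} (applied with the field $K(\sqrt d)=K\otimes_\Q\Q(\sqrt d)$) forces $\rk E^d(K)$ to be a multiple of $2^n$. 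Since the total rank over $KN'$ grows without bound while $[KN':K]$ is finite at each stage, infinitely many distinct $d$ (up to squares in $K^\times$) must occur with $\rk E^d(K)>0$, hence with $\rk E^d(K)$ a positive multiple of $2^n$.

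The main obstacle I expect is the bookkeeping in the previous paragraph: one must check that $K\otimes_\Q N'$ is actually a field for the relevant multiquadratic $N'$ (i.e.\ that $N'$ and $K$ are linearly disjoint over $\Q$, or else pass to a cofinal system of $N'$ for which this holds — one can always enlarge $N'$ within $\Q(2)$ to arrange this), and that the character-decomposition of $\Q\otimes E(KN')$ really does split off the individual twists $\Q\otimes E^d(K)$ with multiplicity one, so that unboundedly many twists of positive rank are forced rather than a single twist of unbounded rank (which is impossible by Mordell--Weil for a fixed $d$, but one wants the count to be genuinely infinite). Granting these routine verifications, the theorem follows.
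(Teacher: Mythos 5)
Your proposal is correct and follows essentially the same route as the paper: fix the curve from Proposition~\ref{endcyc}, use the positive answer to Question~\ref{as2} to extract infinitely many quadratic twists $E^d$ of positive rank over $K$ (via the decomposition of the rank over multiquadratic extensions), and then apply Proposition~\ref{ext} to $K(\sqrt d)=K\otimes_\Q\Q(\sqrt d)$ together with $\rk E(K(\sqrt d))=\rk E(K)+\rk E^d(K)$ to conclude $2^n\mid\rk E^d(K)$. The paper's write-up is just terser, treating the "infinitely many positive-rank twists" step as immediate from the hypothesis and discarding the finitely many $d$ with $\sqrt d\in K$, exactly as you do.
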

\begin{proof}

As shown in Proposition \ref{endcyc}, there exists an elliptic curve $E$ over a number field $K$ such that $\Q\otimes\End_\Q (\Res_{K/\Q} E)$ contains a number field $B$ of degree $2^n$. It follows that the rank of $E(K)=(\Res_{K/\Q}E)(\Q)$ is divisible by $2^n$.

By assumption, $E$ has infinitely many (pairwise non-isomorphic) quadratic twists $E^d$, with $d$ a square-free integer, such that $\rk E^d(K)>0$. Let $E^d$ be such a twist, with $\sqrt d \not\in K$. As $K\otimes_\Q \Q(\sqrt d)\simeq K(\sqrt d)$ is a field (see for example \cite[Theorem 2.2]{ct}), it follows from Proposition \ref{ext} that the rank of $E(K(\sqrt d))=(\Res_{K/\Q}E)(\Q(\sqrt d))$ is divisible by $2^n$. But
$$\rk E(K(\sqrt d))=\rk E(K)+\rk E^d(K),$$
from which it follows that $\rk E^d(K)$ is divisible by $2^n$.
\end{proof}

\begin{acknowledgments}
We are greatly indebted to Jordi Quer for help with the proof of Proposition \ref{endcyc}. We are grateful to Andrej Dujella, Ivica Gusi\'c, and Matija Kazalicki for many helpful comments and motivating discussions.
\end{acknowledgments}

\end{document}